\numberwithin{equation}{section}
\def\CC{{\mathbb C}}
\def\QQ{{\mathbb Q}}
\def\ZZ{{\mathbb Z}}
\def\Ical{{\mathcal I}}
\def\Mcal{{\mathcal M}}
\def\Scal{{\mathfrak S}}
\def\splie{{\mathfrak{sp}}}
\def\ssm{\smallsetminus}
\def\pt{{\scriptscriptstyle\bullet}}
\newcommand\diff{\operatorname{Diff}}
\newcommand\Gr{\operatorname{gr}}
\newcommand\Hom{\operatorname{Hom}}
\newcommand\Ind{\operatorname{Ind}}
\newcommand\Mod{\operatorname{Mod}}
\newcommand\pic{\operatorname{Pic}}
\newcommand\sym{\operatorname{Sym}}
\newcommand\sign{\operatorname{sign}}
\newcommand\Tor{\operatorname{Tor}}
\newtheorem{theorem}{Theorem}[section]
\newtheorem{lemma}[theorem]{Lemma}
\newtheorem{proposition}[theorem]{Proposition}
\newtheorem{corollary}[theorem]{Corollary}
\numberwithin{definition}{section}
\theoremstyle{remark}
\newtheorem{remark}[theorem]{Remark}
\title[Torelli action on the configuration space of a surface]{Torelli group action on the configuration space of a surface}
\author{Eduard Looijenga}
\address{Yau Mathematical Sciences Center, Tsinghua University, Beijing; Mathematisch Instituut, Universiteit Utrecht, Nederland}
\thanks{Supported by the Chinese National Science Foundation.}
\begin{document}
%\subjclass[2000]{81R10, 17B65}
%\keywords{}
\begin{abstract}
We show that the Torelli group of a closed surface of genus $\ge 3$ acts nontrivially on the rational  cohomology of its space of $3$-element subsets.
This implies that if the surface has the structure of a Riemann surface, the mixed Hodge structure on the space of its positive reduced divisors of degree 
$3$ does in general not split over $\QQ$.
\end{abstract}
\maketitle
%\tableofcontents

\section{Introduction} 

Given a manifold $X$ and an integer $n>0$, then its $n$-point configuration space $F_n(X)$ is by definition  the open subset in $X^n$ with all its components distinct.
It is clear that this open subset is preserved by the diffeomorphism group $\diff(X)$ of $X$  and that the resulting action on $H^\pt(F_n(X))$ factors through the mapping class group
$\Mod(X):=\pi_0(\diff(X))$ of $X$ (\footnote{Unless otherwise specified, in  this note  (singular) homology and cohomology is taken with $\QQ$-coefficients.}). 

When $X$ is a complex projective manifold, there is a spectral sequence \cite{totaro} which produces the weight filtration on $H^\pt(F_n(X))$. It  shows that $\Gr^W_\pt H^\pt(F_n(X))$ only depends on $H^\pt(X)$, that the action of $\Mod(X)$ on $H^\pt(F_n(X))$ preserves the weight filtration and that its action on $\Gr^W_\pt H^\pt(F_n(X))$ is through its representation on $H^\pt(X)$.
It has been claimed (\cite{gorinov}, now withdrawn)  that for such an $X$, the weight filtration
on $H^\pt(F_n(X))$ is split. As such a splitting is necessarily unique, this would imply the much stronger assertion that what is true for $\Gr^W_\pt H^\pt(F_n(X))$ is in fact true for $H^\pt(F_n(X))$:  $H^\pt(F_n(X))$ would then only depend on 
$H^\pt(X)$ and $\Mod(X)$ would act on $H^\pt(F_n(X))$ through its representation on $H^\pt(X)$.
We prove that this is not the case. In fact, we show that when $X$ is a closed  orientable surface of genus $\ge 3$, then its Torelli group acts nontrivially on 
$H^3(F_3(X))^{\Scal_3}\cong H^3(F_3(X)/\Scal_3)$, where $F_3(X)/\Scal_3$ has of course the interpretation as the space of $3$-element subsets of $X$.
 As this is a purely topological assertion,  we kept the discussion of our example in that spirit, thereby excising all Hodge theory, but in the last section we describe some of the implications for the mixed Hodge structure on $F_3(X)/\Scal_3$ when $X$ comes with a complex structure, making it a compact Riemann surface.  
 
Our interest in this issue actually originates here, for in the case of  Riemann surfaces we envisage an application to WZW theory: here 
the degree $n$-part of the Hodge filtration on  $H^n(F^n(X\ssm P))$ (with $P$ is a possibly empty finite subset of $X$) is relevant.

Some previous work in this area connected with this paper was concerned with the case that the surface $X$ has  a boundary component $\partial X$: 
Moriyama \cite{moriyama} proves among other things that the Torelli group then acts on its rational cohomology of $F_n(X)$ `as unipotently as possible' and Bianchi \cite{bianchi} recently showed that if $X$ has genus $\ge 2$, then for $n\ge 2$, the representation of $\Mod(X, \partial X)$ on $H_2(F_n(X))$ does not factor through its symplectic representation on $H_1(X)$.
\\

\emph{Acknowledgments.} I thank Dick Hain, Dan Petersen and  Alexey Gorinov for correspondence on this issue. I thank  Dick and  Benson Farb  for drawing my attention to \cite{gorinov} resp.\ \cite{moriyama}.

Dick  has informed me that he also verified  that the Torelli group of a surface can act nontrivially on the cohomology of its configuration space. After posting the first version, I learned from Oscar Randal-Williams that similar results have been obtained by his student Andreas Stavrou and  will appear in his forthcoming doctoral Cambridge thesis. Oscar also pointed me to the work of Bianchi.

\section{The configuration space of 3 points on a surface}
In this section, $X$ is a closed connected (differentiable) surface of genus $\ge 2$; we later assume $g\ge 3$. 
We write $V$ of the symplectic vector space $H_1(X)$ and let $\delta\in \wedge^2V$ represent the intersection pairing on $V^\vee=H^1(X)$.
We have a $\splie(V)$-equivariant splitting  $\wedge^3V=\wedge^3_oV\oplus \delta\wedge V$, where $\wedge^3_oV$ if the kernel of the map 
$\wedge^3V\to V$ defined by $a_0\wedge a_1\wedge a_2\mapsto \sum_{i\in\ZZ/3} (a_i\cdot a_{i+1})a_{i+2}$. The summand $\wedge^3_oV$
is additively spanned by the $a_0\wedge a_1\wedge a_2$ for which the intersection products $a_i\cdot a_j$ are all zero.
For $g\ge 3$, it is irreducible  as a  $\splie(V)$-representation (it represents the third fundamental representation).

The following lemma (the proof of which is left to the reader)  shows among other things that the rational homology of $F_2(X)$ in degree $2$ is simply expressed in terms of the rational cohomology of $X$.

\begin{lemma}\label{lemma:emb}
The inclusion $F_2(X)\subset X^2$ identifies  $H_2(F_2(X))$  with the classes in $H_2(X^2)$ that have zero intersection number with the diagonal.
In particular, it induces an isomorphism between the spaces of $\Scal_2$-anti-invariants $H_2(F_2(X))^{\sign}\cong H_2(X^2)^{\sign}$, where we note that the latter space  is via the K\"unneth decomposition identified with the direct
sum of $\sym^2V$ and the span of $[X]\times 1-1\times [X]$.  $\square$
%Dually, $H^2(F_2(X^2)$ can be identified with the kernel of $\PD[D]$, when viewed as a linear form on $H_2(X)$, i.e., in terms of the K\"unneth decomposition,   consists of the $z=\sum_i (a_i\times a'_i)+ c[X]\times 1 +c' 1\times [X]$  with $a_i, a'_i\in H_1(X)$ such that $\sum_i (a_i\cdot a'_i) +c+c'=0$. 
\end{lemma}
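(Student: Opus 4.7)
The plan is to derive both assertions from the long exact sequence of the pair $(X^2, F_2(X))$ combined with the Thom isomorphism for the normal bundle of the diagonal $\Delta\subset X^2$, which is canonically the tangent bundle of $\Delta\cong X$. Excision identifies $H_k(X^2, F_2(X))$ with the local homology of a tubular neighborhood of $\Delta$, and the Thom isomorphism rewrites this as $H_{k-2}(\Delta)$. Under Poincar\'e duality, the induced map $H_k(X^2)\to H_{k-2}(\Delta)$ is intersection with $\Delta$ followed by restriction.

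With this setup the relevant portion of the long exact sequence in homology reads
\[
H_3(X^2)\longrightarrow H_1(\Delta) \longrightarrow H_2(F_2(X))\longrightarrow H_2(X^2)\longrightarrow H_0(\Delta)=\QQ,
\]
where the rightmost arrow is the intersection number with $[\Delta]$. To obtain the first assertion the main step is to show that the leftmost arrow is surjective, which will force $H_2(F_2(X))\hookrightarrow H_2(X^2)$ with image exactly the kernel of the intersection with $[\Delta]$. Via K\"unneth, $H_3(X^2)\cong V\otimes \QQ[X]\oplus \QQ[X]\otimes V$, and a class $\alpha\otimes[X]$ meets $\Delta$ in the $1$-cycle $\alpha$ (viewed inside $\Delta\cong X$), which gives surjectivity onto $H_1(\Delta)=V$.

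For the anti-invariant version, the key point is that $[\Delta]\in H_2(X^2)$ is $\Scal_2$-invariant since the swap $\tau$ preserves $\Delta$ and is orientation-preserving (as $\dim X$ is even). Hence for $\alpha\in H_2(X^2)^{\sign}$ one has $\langle \alpha, [\Delta]\rangle = \langle \tau\alpha,\tau[\Delta]\rangle = -\langle \alpha,[\Delta]\rangle$, forcing the intersection number to vanish. Combined with the first part this gives inclusions $H_2(X^2)^{\sign}\subseteq H_2(F_2(X))^{\sign}\subseteq H_2(X^2)^{\sign}$, which must all be equalities. The explicit K\"unneth form of $H_2(X^2)^{\sign}$ then follows from a routine sign check using the Koszul rule $(-1)^{pq}$ for the $\tau$-action on $H_p(X)\otimes H_q(X)$: the $\sign$-part of $V\otimes V$ is $\sym^2 V$, and that of $H_0(X)\otimes H_2(X)\oplus H_2(X)\otimes H_0(X)$ is spanned by $[X]\times 1 - 1\times [X]$.

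I expect the only real obstacle here to be bookkeeping: making precise the Poincar\'e-dual identification of the connecting map with intersection with $\Delta$, and keeping the Koszul signs straight in the $\Scal_2$-action. Both are entirely standard.
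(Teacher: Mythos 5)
Your argument is correct, and it is exactly the standard excision/Thom/long-exact-sequence argument that the paper has in mind: the lemma's proof is explicitly ``left to the reader,'' and the author invokes the same Gysin-sequence technique for Proposition \ref{prop:invariants}. The two essential points --- surjectivity of $H_3(X^2)\to H_1(\Delta)$ via the classes $\alpha\times[X]$, and the vanishing of $\langle\alpha,[\Delta]\rangle$ for anti-invariant $\alpha$ because the swap is orientation-preserving and fixes $[\Delta]$ --- are both handled correctly, as is the Koszul sign bookkeeping identifying the anti-invariants with $\sym^2V\oplus\QQ\,([X]\times 1-1\times[X])$.
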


%and then for any $\tau\in \Scal_3$, let $f_{\tau(1) \tau(2)}=\tau f_{12}$. So  $f_{ij}$ and $f_{ji}$ have as image the diagonal locus $D_{ij}$  in $X^3$ defined by $z_i=z_j$, but differ by the exchange $\sigma :X^2\to X^2$.

Let  $f_{12}: X^2\to X^3$ be the map $f_{12}(z,z')=(z,z,z')$. 
If $D_{123}\subset X^3$ stands for the main diagonal, then $f_{12}$ maps $F_2(X)$ isomorphically onto $D_{12}\ssm D_{123}$.  Choose a tubular neighborhood boundary $E\subset F_3(X)$ over $D_{12}\ssm D_{123}$ and  regard this via $f_{12}$ as an oriented circle bundle over $F_2(X)$. This gives rise to a \emph{Lefschetz tube mapping} 
\[
T: H_2(F_2(X))\to H_3(F_3(X))
\]
which assigns to a $2$-cycle on $F_2(X)$ its preimage in $E$, viewed as a $3$-cycle on $F_3(X)$. This map is not equivariant with respect to the $\Scal_2$-action, but  takes $\Scal_2$-invariants to $\Scal_2$-anti-invariants and vice versa.
We extract from this a $\Scal_3$-equivariant map, by bringing the $\Scal_3$-translates of $D_{12}$ (whose union is the `fat' diagonal of $X^3$) into play.:
\[
\Ind^{\Scal_3}_{\Scal_2} (H_2(F_2(X))\otimes\sign)\xrightarrow{T} H_3(F_3(X))
\]
By passing to $\Scal_3$-invariants, we then get a map $H_2(F_2(X))^{\sign}\to H_3(F_3(X))^{\Scal_3}$.
 In view of Lemma \ref{lemma:emb}, we may identify $H_2(F_2(X))^{\sign}$ with $H_2(X^2)^{\sign}$.
This results in a map $H_2(X)^{\sign}\to H_3(F^3(X))^{\Scal_3}$, which we still denote by $T$. It  appears in the proposition below. 

\begin{proposition}\label{prop:invariants}
We  have an exact sequence 
\[
0\to H_2(X)^{\sign}\xrightarrow{T}  H_3(F^3(X))^{\Scal_3}\to \wedge_o^3 V\to 0,
\]
and the $\Scal_3$-invariant supplement of $H_3(F^3(X))^{\Scal_3}$ in $H_3(F^3(X))$ embeds in $H_3(X^3)$.
\end{proposition}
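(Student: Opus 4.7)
My plan is to derive both statements from the long exact sequence of the pair $(X^3, F_3(X))$ in rational homology,
\[
\cdots \to H_4(X^3) \to H_4(X^3, F_3(X)) \xrightarrow{\partial} H_3(F_3(X)) \xrightarrow{\iota_*} H_3(X^3) \xrightarrow{\rho} H_3(X^3, F_3(X)) \to \cdots ,
\]
read isotypically with respect to the $\Scal_3$-action by permutation of factors of $X^3$.

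First I would compute the relative term using the stratified structure of the fat diagonal $D = D_{12}\cup D_{13}\cup D_{23}$. Excision and the Thom isomorphism applied to the three smooth codimension-two strata $D_{ij}\ssm D_{123}$ contribute $\bigoplus_{\{ij\}} H_{k-2}(F_2(X))$. The main diagonal $D_{123}\cong X$ has codimension $4$ in $X^3$, and its normal bundle (namely $TX\oplus TX$ restricted to $D_{123}$) admits a nowhere-zero section for dimensional reasons; via the spectral sequence of the stratification of $D$, this should ensure that the $D_{123}$ stratum does not perturb the relative homology in the degrees $k\le 4$ that we need. The $\Scal_3$-action permutes the three summands transitively with stabilizer $\Scal_2$, and the transposition $(ij)$ reverses the orientation of the normal plane to $D_{ij}$, so as a $\Scal_3$-representation
\[
H_k(X^3, F_3(X)) \cong \Ind_{\Scal_2}^{\Scal_3}\!\bigl(H_{k-2}(F_2(X))\otimes \sign\bigr).
\]
Frobenius reciprocity then gives $H_k(X^3, F_3(X))^{\Scal_3} \cong H_{k-2}(F_2(X))^{\sign}$, and naturality of Thom's isomorphism identifies $\partial^{\Scal_3}$ with the Lefschetz tube map $T$ of the introduction.

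Next I would analyse $\rho^{\Scal_3}$. K\"unneth together with the Koszul sign rule (since $H_1(X)$ sits in odd degree) gives $H_3(X^3) = V^{\otimes 3}\oplus 6V$, with $\Scal_3$ acting by the sign-twisted permutation on $V^{\otimes 3}$, whence $H_3(X^3)^{\Scal_3} = \wedge^3 V\oplus V$. The residue at $D_{ij}$ is cap product with the Poincar\'e dual of the diagonal; on a class $a_0\wedge a_1\wedge a_2\in\wedge^3 V$ the diagonal intersection formula produces a linear combination of terms $(a_i\cdot a_j)a_k$, and summing over the three pairs and passing to $\Scal_3$-invariants reproduces exactly the map $\wedge^3 V\to V$ whose kernel is $\wedge^3_o V$. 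A parallel calculation should show that the $V$-summand of $H_3(X^3)^{\Scal_3}$, coming from $H_0\o H_1\o H_2$-type K\"unneth terms, maps isomorphically to its image in $H_1(F_2(X))^{\sign}$. Combining, $\ker \rho^{\Scal_3} = \wedge^3_o V$. To complete the short exact sequence it remains to verify that $\partial^{\Scal_3}$ is injective, i.e.\ that $H_4(X^3)^{\Scal_3}\to H_2(F_2(X))^{\sign}$ is zero, which is an explicit intersection computation in $X^3$.

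For the supplement statement, I would show that $\mathrm{im}(\partial)\subset H_3(F_3(X))^{\Scal_3}$. Decomposing $\Ind_{\Scal_2}^{\Scal_3}(H_2(F_2(X))\otimes\sign)$ into its trivial, $\sign$, and standard isotypes, the claim is that the $\sign$- and $\mathrm{std}$-parts lie in the image of $H_4(X^3)\to H_4(X^3, F_3(X))$, which again reduces to a direct K\"unneth and intersection-number check. The hardest step of the plan will be the first one: controlling the local contribution of the triple diagonal $D_{123}$ (whose link is a configuration of three pairwise linked circles in $S^3$) to $H_*(X^3, F_3(X))$, and tracking the Koszul and orientation signs through the Thom isomorphism and the $\Scal_3$-action.
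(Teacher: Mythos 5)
Your overall route---the long exact homology sequence of $(X^3,F_3(X))$, computation of the relative term by Thom isomorphisms along the strata of the fat diagonal, identification of $\partial$ with the tube map and of $\rho$ with intersection against the diagonals---is exactly what the paper has in mind: the author declines to write the proof out, pointing to the Leray spectral sequence of $F_3(X)\subset X^3$ or, equivalently, to the long exact sequence of the triple $(X^3,X^3\ssm D_{123},F_3(X))$ and its Gysin sequences. Your treatment of $\rho^{\Scal_3}$ and of the supplement is a sound plan. But the first step, which you yourself flag as the hardest, fails as written. The claim that $D_{123}$ ``does not perturb the relative homology in degrees $k\le 4$'' is false, and the reason offered (a nowhere-zero section of the normal bundle) does not bear on the question. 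Since $D_{123}\cong X$ has codimension $4$, the triple gives $H_4(X^3,X^3\ssm D_{123})\cong H_0(X)=\QQ$, and the connecting map from this $\QQ$ to $H_3(X^3\ssm D_{123},F_3(X))\cong\bigoplus_{ij}H_1(F_2(X))$ kills the generator, because it sends it to the classes of the three linking circles, which vanish as $H_1(F_2(X);\QQ)\cong H_1(X^2;\QQ)$. Hence $H_4(X^3,F_3(X))$ is an extension of an extra $\QQ$ by $\Ind_{\Scal_2}^{\Scal_3}(H_2(F_2(X))\otimes\sign)$, not the induced module itself, and your Frobenius-reciprocity identification of the invariants, and with it the meaning of ``$\partial^{\Scal_3}$ is injective,'' is off. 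The statement is rescued because this extra invariant class dies under $\partial$: it is represented by a transverse $4$-disc rel boundary, whose boundary, the link $S^3$ of a triple point, already bounds in the configuration space of a coordinate chart since $H_3(F_3(\RR^2);\QQ)=0$. What must actually be proved is that $\partial$ is injective on the invariants of the $\Ind$ summand, i.e.\ that $\mathrm{im}\bigl(H_4(X^3)^{\Scal_3}\bigr)$ meets $H_2(F_2(X))^{\sign}$ trivially.

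A second concrete error: your source for the $\otimes\sign$ twist is wrong. The transposition $(ij)$ acts trivially on $D_{ij}$ and by $-\mathrm{id}$ on its normal plane, and $-\mathrm{id}$ on an oriented $2$-plane is orientation-\emph{preserving}. The sign that makes $T$ exchange $\Scal_2$-invariants and anti-invariants does not come from the stabilizer's action on the normal bundle; the swap $(z,z')\mapsto(z',z)$ of $F_2(X)$ is not even induced by an element of $\Scal_3$ preserving $D_{12}$, and the twist has to be extracted from comparing the parametrizations $f_{ij}$ of the several diagonals under the $\Scal_3$-action, i.e.\ from the Koszul and orientation signs in the K\"unneth--Thom identifications. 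This is not cosmetic: whether one ends up with $H_2(F_2(X))^{\sign}=H_2(X^2)^{\sign}$ (as the proposition asserts) or with the invariants depends on it, and the remaining ``explicit intersection computation'' for injectivity is itself a pure sign check---classes such as the symmetrization of $[X]\times a\times b$ do meet the diagonals nontrivially, and one must see that their contributions land in the wrong isotypic component. With the $D_{123}$ correction and the sign bookkeeping done honestly, your plan does yield the proposition by the paper's intended method.
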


We omit the proof, as this follows in a rather straightforward manner from the Leray spectral sequence for the embedding $F_3(X)\subset X^3$ and then dualizing
(see for instance \cite{looij}, \cite{totaro})  or by what is essentially equivalent to this,  a study of the long exact sequence for homology of the triple $(X^3, X^3\ssm D_{123}, F^3(X))$ and the Gysin sequences with $\Scal_3$-action it gives rise to.

Our example is based on the following corollary.

\begin{corollary}\label{cor:T}
If $a, b\in V$ have zero intersection number, then $a\times b$ represents an element of $H_2(F_2(X))$ and if both
are nonzero, then $T(a\times b)\not= 0$. 
\end{corollary}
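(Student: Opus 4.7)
The plan is to apply the injectivity of the induced tube map on $H_2(X^2)^{\sign}$ given by Proposition~\ref{prop:invariants}, after splitting $a\times b$ into its $\Scal_2$-isotypical pieces. First, by Lemma~\ref{lemma:emb}, the assertion that $a\times b$ represents a class in $H_2(F_2(X))$ is equivalent to its intersection number with the diagonal in $X^2$ being zero; this intersection is the symplectic pairing $a\cdot b$, which vanishes by hypothesis.

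Next I decompose $a\times b=\alpha+\beta$ under the factor-swap action of $\Scal_2$ on $H_2(X^2)$, where $\alpha=\tfrac12(a\times b+b\times a)$ is the $\sign$-part lying in $\sym^2V$ and $\beta=\tfrac12(a\times b-b\times a)$ is the $\Scal_2$-invariant part lying in $\wedge^2V$. The crucial elementary fact is that $\alpha$ is nonzero whenever $a$ and $b$ are both nonzero: identifying $\sym^2V$ with the degree-two piece of the polynomial ring $\sym^\pt V$, $\alpha$ becomes the product $a\cdot b$ of two nonzero linear forms in an integral domain over $\QQ$.

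Proposition~\ref{prop:invariants} then does the decisive work: its exact sequence gives the injectivity of the induced map $\tilde T\colon H_2(X^2)^{\sign}\to H_3(F_3(X))^{\Scal_3}$, so $\tilde T(\alpha)\ne 0$. Unwinding Frobenius reciprocity inside the construction of $\tilde T$, one finds that $\tilde T(\alpha)$ equals the sum $\sum_{\sigma\in\Scal_3/\Scal_2}\sigma\cdot T(\alpha)$ of $\Scal_3$-translates of the original single-diagonal tube class; if $T(\alpha)$ itself vanished this whole sum would vanish too, so $T(\alpha)\ne 0$. To upgrade this to $T(a\times b)\ne 0$, I invoke the fact (recorded when $T$ was constructed) that $T$ interchanges $\Scal_2$-invariants and anti-invariants: this places $T(\alpha)$ and $T(\beta)$ in complementary $\Scal_2$-isotypical subspaces of $H_3(F_3(X))$, so $T(a\times b)=T(\alpha)+T(\beta)$ can vanish only if both summands do, and $T(\alpha)\ne 0$ rules this out. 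The most delicate point in the plan is unwinding the Frobenius identification concretely enough to read $\tilde T(\alpha)$ as the symmetrization of a single translate of $T(\alpha)$; once that bookkeeping is carried out, the rest of the argument is automatic.
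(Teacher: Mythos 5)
Your proposal is correct and follows essentially the same route as the paper: both arguments isolate the $\Scal_2$-anti-invariant component $\tfrac12(a\times b+b\times a)$, identify it with the nonzero element $ab\in\sym^2V$, and invoke the injectivity of $T$ on $H_2(X^2)^{\sign}$ coming from the exact sequence of Proposition~\ref{prop:invariants}. The extra steps you include (unwinding the Frobenius identification and separating $T(\alpha)$, $T(\beta)$ into complementary $\Scal_2$-isotypic pieces) are sound and merely make explicit what the paper compresses into the phrase ``the $\Scal_3$-invariant component of $T(a\times b)$ is $\tfrac12 T(a\times b+b\times a)\neq 0$.''
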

\begin{proof}
The first assertion follows from Lemma \ref{lemma:emb}. If $a$ and $b$ are both nonzero, then by Proposition \ref{prop:invariants}
the $\Scal_3$-invariant component of $T(a\times  b)$ is $\tfrac{1}{2}T(a\times b+b\times a)\not=0$ ,  so that $T(a\times b)\not=0$.
\end{proof}

We now assume  that  $a$ and $b$ are given by embeddings $S^1\hookrightarrow X$ whose images are disjoint nonseparating curves.
 
\begin{lemma}\label{lemma:homology}
Let $a_+, a_-: S^1\to X$ be embeddings  $C^\infty$-close to, but disjoint with $a$,  with $a_+$ to the right and  $a_-$ to the left of $a$.
Then $T(a\times b)$ is represented by $a\times a_+\times b-a\times a_-\times b$. 
\end{lemma}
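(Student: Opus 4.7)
The plan is to construct an explicit 4-chain $C$ in $F_3(X)$ with boundary
\[
\partial C \;=\; a \times a_+ \times b \;-\; a \times a_- \times b \;-\; T(a\times b),
\]
which immediately yields the lemma. The 4-chain will live near $a\times a\times b \subset X^3$: the second point sweeps across the normal coordinate from $a_-$ to $a_+$ while the first stays on $a$ and the third on $b$, and the tube summand emerges from excising the collision locus.

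Concretely, choose cylindrical coordinates $(s,r)$ on a tubular neighborhood $U\cong S^1\times(-\delta,\delta)$ of $a$ in $X$, arranged so that $U$ is disjoint from $b$ and so that $a$, $a_+$, $a_-$ are the circles $\{r=0\}$, $\{r=+\delta/2\}$, $\{r=-\delta/2\}$. Define
\[
\Phi\colon S^1_{s_1}\times S^1_{s_2}\times [-\tfrac{\delta}{2},\tfrac{\delta}{2}]_r \times S^1_t \;\longrightarrow\; X^3,\quad (s_1,s_2,r,t)\longmapsto \bigl((s_1,0),\,(s_2,r),\,b(t)\bigr).
\]
This lands in $F_3(X)$ outside the 2-dimensional locus $\{s_1=s_2,\,r=0\}$ in the domain (where the first two points coincide); excising a small open $\eta$-tubular neighborhood of this locus gives a genuine 4-chain $C\subset F_3(X)$.

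The boundary $\partial C$ decomposes into three pieces. The faces $r=\pm\delta/2$ yield the 3-tori $a\times a_+\times b$ and $a\times a_-\times b$ (with opposite orientations). Around the excised tube, the cross-section over a fixed $(s,t)$ is the small loop $\phi\mapsto \bigl((s,0),(s+\eta\cos\phi,\eta\sin\phi),b(t)\bigr)$; as $(s,t)$ varies over $a\times b$ these loops assemble into the unit circle bundle of the normal bundle of $D_{12}\subset X^3$ restricted to $a\times b$, which is exactly a representative of $T(a\times b)$. (Having the second coordinate move rather than the first corresponds to applying the swap $\sigma_{12}$, which acts by rotation on each normal fibre and so trivially on the tube class.)

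The one point requiring care is the sign with which the excised-tube component enters $\partial C$: it must contribute $-T(a\times b)$ rather than $+T(a\times b)$. This is a routine orientation computation, using the identification of the normal bundle of $D_{12}$ in $X^3$ with the pullback of $TX$ along the first projection together with the outward-normal convention for tubular excisions, but it is the only step that needs careful bookkeeping. Once the sign is verified, reading off $\partial C$ gives the claimed homology.
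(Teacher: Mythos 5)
Your proof is correct, and at bottom it rests on the same geometric fact as the paper's: the region swept out between $a_-$ and $a_+$ hits the diagonal along a codimension-two locus, and the link of that locus is the tube, so excising it exhibits $T(a\times b)$ as homologous to the difference of the two push-offs. The routes differ in packaging. The paper first factors $T(a\times b)=T_X(a)\times b$ and localizes to an annulus $Z$, then uses the map $(z,z')\mapsto(z,zz')$ to straighten the diagonal into the fixed point $z'=1$; after that the whole lemma collapses to the elementary statement that in $Z\ssm\{1\}$ a small circle around $1$ is homologous to the difference of the two boundary circles, and no separate identification of the link with the tube is needed. You instead build the $4$-chain directly in $F_3(X)$ in absolute coordinates, so the collision locus is a $2$-torus and you must argue separately (via the choice of tubular neighborhood model and the effect of the swap on the normal fibre) that the link of the excision really represents $T(a\times b)$; you handle this adequately. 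What your version buys is that it avoids the reduction steps and makes the bounding $4$-chain explicit; what the paper's version buys is that the dimension reduction and the straightening map make both the homology and the sign ($S_{1/4}(1)\sim S_2(0)-S_{1/2}(0)$, with $a_\pm(z)=\tilde a(2^{\pm1}z)$) essentially self-evident, whereas you defer the orientation bookkeeping. That deferred sign check is the only loose end, and it is comparable to the level of detail the paper itself supplies.
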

\begin{proof}
We think of $S^1$ as the unit circle in $\CC$ and extend $a$ to an embedding $\tilde a$ from the closed 
 annulus $Z\subset \CC$ defined by $\tfrac{1}{2}\le |z|\le 2$ in $X\ssm b$ and assume $a_\pm$ given  by  $a_\pm (z)=\tilde a(2^{\pm1}z)$.
Note that $T(a\times b)$ has the form $T_{X}(a)\times b$, where now $T_X$ is the Lefschetz tube map for the diagonal embedding $X\hookrightarrow X^2$.
So it suffices to prove that $T_{X}(a)$ is represented by $a\times a_+-a\times a_-$. As this is an issue that can be dealt with on $Z$, we may replace $X$ by $Z$. Consider the map $F: S^1\times (Z\ssm\{1\}) \to F_2(Z)$, $F(z,z'):=(z, zz')$. 
If $S_{r}(z)\subset \CC$ stands for the circle of radius $r$ centered at $z\in \CC$, then $T_Z(a)$ is represented by $F|S^1\times S_{1/4}(1)$. It is clear that in $Z\ssm\{1\}$, 
$S_{1/4}(1)$ is homologous to $S_{2}(0)-S_{1/2}(0)$. The map $F$ takes $S^1\times S_{2^{\pm 1}}(0)$ to itself  and hence $T_Z(a)$ is represented by 
$S^1\times (S_{2}(0)-S_{1/2}(0))$.
\end{proof}

\subsection*{The example} Let $d,d'$ be a so-called bounding pair on $X$: both are (a priori nonoriented) nonseparating mutually disjoint curves whose union  splits the surface into two pieces. The Dehn twists $\tau_d$ and  $\tau_{d'}$ defined by $d$ and $d'$, induce the same action on $H_1(X)$ and so the action of $\tau_d\tau_{d'}^{-1}$ on $H_1(X)$ is trivial: it is an element of the Torelli group. Note that $\tau_d\tau_{d'}^{-1}$ is represented by a diffeomorphism  with support in a tubular neighborhood $U$ of $d\cup d'$. 
We now assume that $g\ge 3$ and  that $d\cup d'$ bounds a genus 1 surface $X'\subset X$. We show that  $\tau_d\tau_{d'}^{-1}$ acts nontrivially on $H_3(F_3(X^3))$.

We give both $d$ and $d'$ the induced orientation (so that $d+d'$ is null homologous in $X$). The assumption that $g\ge 3$ allows us find 
oriented nonseparating curves $a,b,c$  on $X$ that are pairwise disjoint, with $a$ contained in $X'\ssm  U$, $b$ contained in $X\ssm (X'\cup U)$ and 
$c$ meeting $d$ and $d'$ transversally in a single point with intersection numbers $1$ resp.\ $-1$ (see Figure 1). Then $a\times c\times b$ defines a $3$-cycle in $F_3(X)$.
We note that $\tau_d\tau_{d'}^{-1}$ takes the cycle $c$  to a cycle which in $U$ is homologous to $c+d+d'$. Hence 
$\tau_d\tau_{d'}^{-1} (a\times c\times b)$ is in $F_3(X)$ homologous to $a\times c\times b$ + $a\times (d+d')\times b$. It remains to see that
$a\times (d+d')\times b$ is nonzero in $F_3(X)$. For this we note that $X'$ contains a subsurface with boundary $d+d'\pm (a_+-a_-)$.
It follows that $a\times (d+d')\times b$ is in $F_3(X)$ homologous to $\pm a\times (a_+-a_-)\times b$ and hence, by Lemma \ref{lemma:homology}, to $\pm T(a\times b)$.  According to Corollary \ref{cor:T} this is nonzero.
\\
\begin{figure}\label{fig:example}
\centerline{\qquad\qquad\includegraphics[scale=0.15]{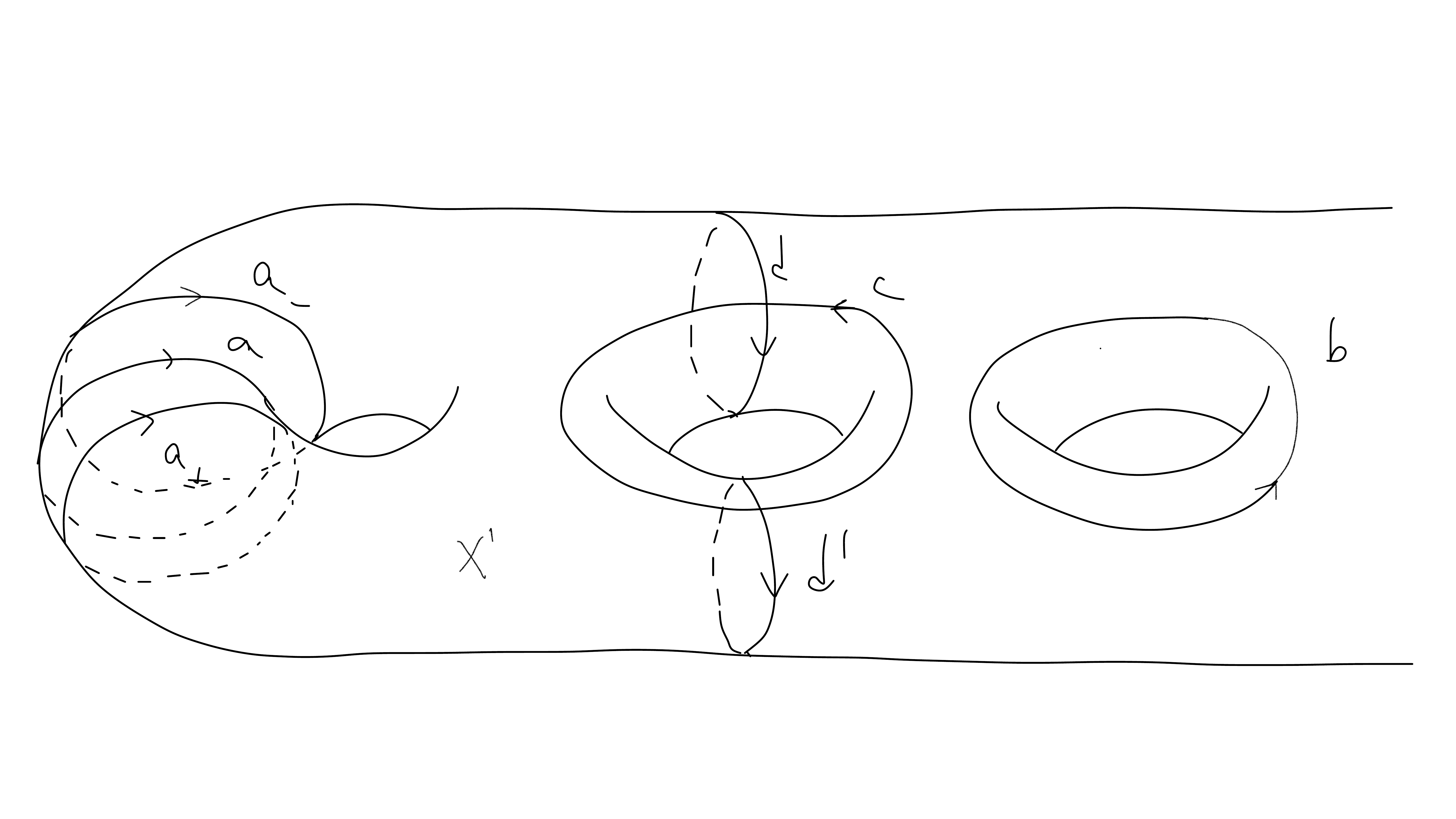}}
\caption{\footnotesize The surface $X$ with the closed curves.}
\end{figure}

\subsection*{The action of the Johnson group} We return to the exact sequence of Proposition \ref{prop:invariants}. Let us identify $\QQ$ with the $\QQ$-span of
$[X]\times 1-1\times [X]$, so that $H^2(X^2)^{\sign}=\QQ\oplus \sym^2V$.
The mapping class  group $\Mod(X)$ acts on this sequence with its action is on the noncentral terms via its symplectic representation 
$\Mod(X)\to\splie (V)$. Hence the Torelli group $\Tor(X)$, which is by definition the kernel of the homomorphism $\Mod(X)\to \splie(V)$, will act unipotently on the middle term. Such an action is given by a group homomorphism 
\[
\Tor(X)\to \Hom (\wedge^3_oV, \QQ\oplus \sym^2V)\cong\Hom (\wedge^3_oV, \QQ)\oplus \Hom (\wedge^3_o V, \sym^2 V).
\]
This action will of course be via the
abelianization of $\Tor(X)$ tensored with $\QQ$. Dennis Johnson \cite{johnson} has identified that group in a natural, $\splie (H_1(X; \ZZ))$-equivariant 
manner, with the 
vector space  $\wedge^3_o V$; we will therefore refer to $\wedge^3_o V$ as the \emph{Johnson group} of $X$. So this gives  maps of $\splie (H_1(X; \ZZ))$representations
\[
\wedge^3_oV\otimes \wedge^3_oV \to \QQ, \quad \wedge^3_oV\otimes \wedge^3_oV \to \sym^2 V.
\]
Since $\splie (H_1(X; \ZZ))$ is generated by its infinite cyclic subgroups of additive type, any  tensor that is $\splie (H_1(X; \ZZ))$-invariant is automatically $\splie(V)$-invariant. So these must be maps of $\splie(V)$ representations. Up to scalar there is in each case only one such map: 
in the first case the invariant symplectic form on $\wedge^3_oV$ defined by 
\[
(a_0\wedge a_1\wedge a_2)\otimes (b_0\wedge b_1\wedge b_2)\mapsto
\sum_{\tau\in \Scal_3}  \sign(\tau)(a_0\cdot b_{\tau(0)})(a_1\cdot b_{\tau(1)})(a_2\cdot b_{\tau(2)})
\] 
and in the second case, the one induced  by 
\[
\Phi: \textstyle (a_0\wedge a_1\wedge a_2)\otimes (b_0\wedge b_1\wedge b_2)\mapsto \sum_{i, j\in \ZZ/3}  q(a_i\wedge a_{i+1}, b_j\wedge b_{j+1}) a_{i+2}b_{j+2},
\]
where we used the natural (invariant) symmetric bilinear form on $\wedge^2V$ defined by 
\[
q(a\wedge a', b\wedge b')=(a\cdot b)(a'\cdot b')-(a \cdot b')(a'\cdot b).
\] 
Our example shows that  one of these maps is nonzero. In order to determine which one, we recall how to make the Johnson identification explicit.
If $(d,d')$ is a bounding pair on $X$, denote by $X'$ and $X''$ the two compact subsurfaces  whose union is $X$ and whose intersection is the given bounding pair. Then
$H_1(X')\to H_1(X)=V$ is an embedding  whose image is degenerate with respect to the intersection pairing, and if we give $d$ 
the orientation induced by $X'$, then  its kernel is spanned by $d$.  The intersection pairing on $H_1(X')$ defines an nonzero element $\delta_{X'}\in \wedge^2H_1(X')/\QQ d$. Wedging this with $d$  gives a well-defined element  $j(X'):=d\wedge \delta_{X'}\in \wedge^3 V$. One checks that $j(X')-j(X'')=d\wedge \delta$,  so that the projection of $j(X')$ in $\wedge_o^3V$ only depends on $(d,d')$. The resulting element $j(d,d')\in \wedge_o^3V$ then represents the image of
$\tau_d\tau_{d'}^{-1}$ in $\wedge_o^3V$.

Returning then to our example, let $a'\in V$ be represented by a cycle on $X'$ for which $a\cdot a'=1$. Then  $j(X')=d\wedge a\wedge a'$. This element 
is not in $\wedge^3_o V$, but since $a,b,c$ span an isotropic subspace,  $a\wedge c\wedge b$ is. Hence
\[
\Phi (j(d,d'), a\wedge c\wedge b)=\Phi(d\wedge a\wedge a', a\wedge c\wedge b)=q(a'\wedge d, a\wedge c)ab=(a'\cdot a)(d\cdot c)ab=ab.
\]
Up to sign this is indeed the variation of $a\wedge c\wedge b$ under $\tau_d\tau_{d'}^{-1}$. So if we denote by 
$H^o_3(F_3(X))^{\Scal_3}$ the quotient of $H_3(F_3(X))^{\Scal_3}$ obtained by dividing out by the copy of $\QQ$ in $H_2(X^2)^{\sign}$, then
the action of the Johnson group on it is nontrivial.

\section{The holomorphic setting and its mixed Hodge theory}

The map $T$  being nonzero in Corollary \ref{cor:T} has the dual interpretation of a residue being nonzero. To make this more appealing to algebraic geometers, assume $X$ endowed with a complex structure inducing the given orientation and denote the resulting compact Riemann surface $C$.
We  denote diagonal of $C^2$ by $D_2$ and the fat diagonal of $C^3$ by $D_3$. We define the  sheaf $\Omega_{C^3}(\log D_3)$ of logarithmic $3$-forms of the pair $(C^3, D_3)$ as the subsheaf
of $\Omega_{C^3}(\Delta)$ spanned by the $\Scal_3$-translates of $\Omega_{C^3}(D_{12}+D_{23})$. Then $f_{ij}$ defines a residue map
$\Omega^3_{C^3}(\log D_3)\to f_{ij*}\Omega^2_{C^2}(D_2)$. These  fit in an exact sequence 
\[
0\to \Omega^3_{C^3}\to \Omega^3_{C^3}(\log D_3)\to \oplus_{i<j}f_{ij*}\Omega^2_{C^2}(D_2)\to 0.
\]
A $2$-form on $C^2$ with only a simple pole along the diagonal is regular (for the residue sum relative to a projection on a factor must be zero). Hence the long exact cohomology sequence begins with
\[
\textstyle 0\to H^0(C^3,\Omega_{C^3})\to H^0(C^3, \Omega_{C^3}(\log D_3))\to H^0(C,\Omega^2_{C^2})^3\to H^1(C^3,\Omega^3_{C^3})
\]
The space $H^0(C^3, \Omega_{C^3}(\log D_3))$ has the  mixed Hodge theory interpretation as $F^3H^3(F_3(C))$.
We explicate the $\Scal_3$-action on this sequence by identifying the term $H^0(C,\Omega^2_{C^2})^3$ as the induced representation $\Ind^{\Scal_3}_{\Scal_2}
(H^0(C^2,\Omega^2_{C^2})\otimes \sign)$.  
%The $\sigma$-invariant part of $H^0(C,\Omega_C^2)$ is $\wedge^2H^0(C,\Omega_C)$ and its $\sigma$-anti-invariant part is
%$\sym^2H^0(C,\Omega_C)$. If we induce these up
The K\"unneth formula identifies $H^1(C^3,\Omega^3_{C^3})$ with the direct sum of the three $\Scal_3$-translates of 
$H^0(C^2, \Omega^2_{C^2})\times \mu$, where $\mu\in H^1(C, \Omega_C)$ is the orientation class, so that $H^1(C^3,\Omega^3_{C^3})$ yields another copy of $\Ind^{\Scal_3}_{\Scal_2} H^0(C,\Omega_C^2)$. The last map is then  given by 
\[
(\alpha_1, \alpha_2, \alpha_3)\mapsto (\alpha_1+\sigma^*\alpha_2, \alpha_2+\sigma^*\alpha_3, 
\alpha_3+\sigma^*\alpha_1),
\]
where $\sigma$ is the transposition map. Its kernel consists of the $(\alpha, \alpha, \alpha)$ with $\alpha+\sigma^*\alpha=0$
(which is equivalent to $\alpha\in \sym^2H^0(C,\Omega_C)$).
So in order that a regular $2$-form $\alpha$ on  $C^2$ appears as the residue of a logarithmic $3$-form $\omega$ on $F_3(C)$, it is necessary and sufficient that $\sigma^*\alpha=-\alpha$, and then $\omega$ will have the same residue on the other diagonal divisors in the sense that
the three residues make up an element of $H^0(C,\Omega_C^2)^3$ that is alternating with respect to the $\Scal_3$-action.  To be concrete: given a local coordinate $z$  at some  $p\in C$, then  $\omega$ has at $(p,p,p)$ the form
\[
\big(\frac{1}{z_1-z_2}+\frac{1}{z_2-z_3}+\frac{1}{z_2-z_3}\big)f(z_1,z_2,z_3)dz_1\wedge  dz_2\wedge dz_3
\]
with $f$ symmetric in its arguments when restricted to the fat diagonal. We can then take $\omega$ to be $\Scal_3$-invariant.  Since the space of symmetric regular $3$-forms on $C^3$ is  $\wedge^3H^0(C, \Omega_C)$, we have
an exact sequence
\[
\textstyle 0\to \wedge^3H^0(C, \Omega_C)\to H^0(C^3, \Omega^3_{C^3}(\log \Delta))^{\Scal_3}\to \sym^2H^0(C,\Omega_C)\to 0
\]
and the $\Scal_3$-invariant supplement of  $H^0(C^3, \Omega^3_{C^3}(\log \Delta))^{\Scal_3}$ is contained in $H^0(C^3, \Omega^3_{C^3})$. The Betti version of the above sequence is
\[
\textstyle 0\to \wedge^3_o V^\vee\to H_o^3(F^3(C))^{\Scal_3}\to \sym^2 V^\vee(-1)\to 0,
\]
where the middle term is the dual of the space $H^o_3(F^3(C))^{\Scal_3}$ in the previous section. It appears here as
a mixed Hodge substructure of $H^3(F_3(C))^{\Scal_3}$ whose  complexification contains $F_3H^3(F_3(C))$. The  exact sequence above
defines the weight filtration on $H_o^3(F^3(C))^{\Scal_3}$: $\wedge^3_o V^\vee$ is pure of weight $3$ and the quotient is of weight $4$. 
The second map is a residue map and dual to $T$. Our previous discussion shows that  for a general choice of complex structure this is not the direct sum of two pure Hodge structures. We already determined $F^3H^3_o(F^3(C))^{\Scal_3}$. One may verify that the other members of the Hodge filtration on 
 $F^3H^3_o(F^3(C))^{\Scal_3}$ can inductively be given by the action of the Johnson group: $F^{i-1}$ is the span of $F^i$ and the image of $F^i$ under $\wedge^3_oV$.

\begin{remark}\label{rem:}
Note that  in this setting  $F_3(C)/\Scal_3$ can be thought of as the base of the universal reduced positive divisor of degree $3$. In case $C$ is nonhyperelliptic of genus $3$ (in other words, a smooth quartic plane curve), then the natural map $C^3/\Scal_3\to \pic^3(C)$ is a birational morphism whose exceptional fibers are of dimension one (the exceptional set in $\pic^3(C)$ is the image of 
$z\in C\mapsto 3(z)\in\pic^3(C)$ and given $z\in C$, then the degree $3$ divisors on $C$ defined by the pencil through $z$ make up an exceptional fiber).  The locus of reduced divisors defines a surface $\Theta_{2,1}\subset \pic^3(C)$ (the image of the map
$(z,z')\in C^2\mapsto 2(z)+(z')\in\pic^3(C)$), and it  then follows that $H^3_o(F^3(C))^{\Scal_3}$ also appears in the cohomology of $\pic^3(C)\ssm \Theta_{2,1}$. Since $\pic^3(C)$ is a torsor for the jacobian of $C$ and almost every principally polarized abelian $3$-fold is such a jacobian, this is in fact a property of a general principally polarized abelian $3$-fold.
\end{remark}


\begin{thebibliography}{PP}

\bibitem{bianchi}
A.~Bianchi:
\textsl{Splitting of the homology of the punctured mapping class group,}
J.~Topol.\ \textbf{13} (2020), 1230--1260. 

\bibitem{gorinov}
A.G.~Gorinov: 
\textsl{A purity theorem for configuration spaces of smooth compact algebraic varieties,} \url{https://arxiv.org/abs/1702.08428}.

\bibitem{johnson}
D.~Johnson:
\textsl{An abelian quotient of the mapping class group $\Ical_g$,}
Math.\  Ann.\ \textbf{249}(1980), 225--242.

\bibitem{looij}
E.~Looijenga:
 \textsl{Cohomology of $\Mcal_3$ and $\Mcal_3^1$. }  in: Mapping class groups and moduli spaces of Riemann surfaces (G\"ottingen, 1991/Seattle, WA, 1991), 205--228, Contemp.~Math., \textbf{150}, Amer.\ Math.\ Soc., Providence, RI, 1993.
 
\bibitem{moriyama}
T.~Moriyama:
 \textsl{The mapping class group action on the homology of the configuration spaces of surfaces,}
J.\ Lond.\ Math.\ Soc.\ (2) \textbf{76} (2007), 451--466. 

\bibitem{totaro}
B.~Totaro: 
\textsl{Configuration spaces of algebraic varieties,}
Topology \textbf{35} (1996), 1057--1067. 

\end{thebibliography}
\end{document}